\newcommand{\Spvek}[2][r]{%
  \gdef\@VORNE{1}
  \left(\hskip-\arraycolsep%
    \begin{array}{#1}\vekSp@lten{#2}\end{array}%
  \hskip-\arraycolsep\right)}
\def\vekSp@lten#1{\xvekSp@lten#1;vekL@stLine;}
\def\vekL@stLine{vekL@stLine}
\def\xvekSp@lten#1;{\def\temp{#1}%
  \ifx\temp\vekL@stLine
  \else
    \ifnum\@VORNE=1\gdef\@VORNE{0}
    \else\@arraycr\fi%
    #1%
    \expandafter\xvekSp@lten
  \fi}
\newtheorem{thm}{Theorem}[section]
\newtheorem{lem}[thm]{Lemma}
\newtheorem{rem}[thm]{Remark}
\theoremstyle{definition}
\newcommand{\scr}[1]{\mathscr #1}
\definecolor{wco}{rgb}{0.5,0.2,0.3}
\numberwithin{equation}{section} \theoremstyle{remark}
\newcommand{\ua}{\uparrow}
\title{{\bf    Harnack and Super Poincar\'{e} Inequalities for Generalized Cox-Ingersoll-Ross Model }\footnote{Supported in
 part by  NNSFC (11801406).}
}
\author{
{\bf     Xing Huang $^{a)}$, Fei Zhao$^{a)}$ }\\
\footnotesize{  a) Center for Applied Mathematics, Tianjin University, Tianjin 300072, China}\\
%\footnotesize{  Center for Applied Mathematics, Tianjin University, Tianjin 300072, China}\\
\footnotesize{  zhaofeitju@163.com}}
\begin{document}
\allowdisplaybreaks
\def\R{\mathbb R}  \def\ff{\frac} \def\ss{\sqrt} \def\B{\mathbf
B}
\def\N{\mathbb N} \def\kk{\kappa} \def\m{{\bf m}}
\def\ee{\varepsilon}\def\ddd{D^*}
\def\dd{\delta} \def\DD{\Delta} \def\vv{\varepsilon} \def\rr{\rho}
\def\<{\langle} \def\>{\rangle} \def\GG{\Gamma} \def\gg{\gamma}
  \def\nn{\nabla} \def\pp{\partial} \def\E{\mathbb E}
\def\d{\text{\rm{d}}} \def\bb{\beta} \def\aa{\alpha} \def\D{\scr D}
  \def\si{\sigma} \def\ess{\text{\rm{ess}}}
\def\beg{\begin} \def\beq{\begin{equation}}  \def\F{\scr F}
\def\Ric{\text{\rm{Ric}}} \def\Hess{\text{\rm{Hess}}}
\def\e{\text{\rm{e}}} \def\ua{\underline a} \def\OO{\Omega}  \def\oo{\omega}
 \def\tt{\tilde} \def\Ric{\text{\rm{Ric}}}
\def\cut{\text{\rm{cut}}} \def\P{\mathbb P} \def\ifn{I_n(f^{\bigotimes n})}
\def\C{\scr C}   \def\G{\scr G}   \def\aaa{\mathbf{r}}     \def\r{r}
\def\gap{\text{\rm{gap}}} \def\prr{\pi_{{\bf m},\varrho}}  \def\r{\mathbf r}
\def\Z{\mathbb Z} \def\vrr{\varrho} \def\ll{\lambda}
\def\L{\scr L}\def\Tt{\tt} \def\TT{\tt}\def\II{\mathbb I}
\def\i{{\rm in}}\def\Sect{{\rm Sect}}  \def\H{\mathbb H}
\def\M{\scr M}\def\Q{\mathbb Q} \def\texto{\text{o}} \def\LL{\Lambda}
\def\Rank{{\rm Rank}} \def\B{\scr B} \def\i{{\rm i}} \def\HR{\hat{\R}^d}
\def\to{\rightarrow}\def\l{\ell}\def\iint{\int}
\def\EE{\scr E}\def\no{\nonumber}
\def\A{\scr A}\def\V{\mathbb V}\def\osc{{\rm osc}}
\def\BB{\scr B}\def\Ent{{\rm Ent}}\def\3{\triangle}\def\H{\scr H}
\def\U{\scr U}\def\8{\infty}\def\1{\lesssim}\def\HH{\mathrm{H}}
 \def\T{\scr T}
\maketitle

\begin{abstract} In this paper, the Harnack inequalities and super Poincar\'{e} inequality for generalized Cox-Ingersoll-Ross model are obtained. Since the noise is degenerate, the intrinsic metric has been introduced to construct the coupling by change of  measure. By using isoperimetric constant, some optimal estimate of the rate function in the super Poincar\'{e} inequality for the associated Dirichlet form is also obtained.
\end{abstract} \noindent
 AMS subject Classification:\  60H10, 60H15.   \\
\noindent
 Keywords: Coupling by change of measure; Harnack inequality; Isoperimetric constant; Super Poincar\'{e} inequality; Generalized Cox-Ingersoll-Ross model
 \vskip 2cm

\section{Introduction}
The SDE
\begin{equation}\label{E0}
\d X_t=(\alpha-\delta X_t)\d t+\sqrt{X_t} \d B_t,\ \ X_0>0,
\end{equation}
which is called CIR (Cox-Ingersoll-Ross) model \cite[Section 4.6]{C}, is used to characterize the evolution of the
interest rate in finance. In \cite{A,A1,A2,CJM,GR,S,WMC,YW}, the authors investigate the convergence rate of various numerical schemes of \eqref{E0}. Zhang and Zheng \cite{ZZ} obtain the Harnack inequality and super Poincar\'{e} for \eqref{E0}. See \cite{CIR,CIR1,CL} for more introductions on \eqref{E0}.

In this paper, we  consider stochastic differential equations on $[0,\infty)$:
\begin{equation}\label{1.1}
\d X_t=(\alpha-\delta X_t)\d t+X_t^{h} \d B_t,
\end{equation}
with constant $\frac{1}{2}<h<1$, ${\alpha}, {\delta}>0$, and $B_t$ a is one-dimensional Brownian motion on some complete filtration probability space $(\OO, \F, \{\F_{t}\}_{t\ge 0}, \P)$. We call \eqref{1.1} a generalized CIR model. By \cite{IW,KS}, for any $x\in[0,\infty)$, \eqref{1.1} has a unique non-negative strong solution $X_t^x$ with initial value $x$. Let $P_t$ be the associated semigroup, i.e.
$$P_tf(x)=\E f(X_t^x), \ \  f\in\B_b([0,\infty)).$$

Compared with SDE \eqref{E0}, the diffusion in \eqref{1.1} has stronger degeneration on 0 due to $h>\frac{1}{2}$, which leads to worse regularity of the solution. Thus, the Harnack inequality for the semigroup associated to \eqref{1.1} is non-trivial.

Wang \cite{W} introduced coupling by change of measure to establish Harnack inequality in the SDEs with non-degenerate diffusion coefficients, see \cite{ATW,W3,WY} for more models. Wang \cite{W2} also gives some conditions to obtain super Poincar\'{e} inequality. Zhang and Zheng \cite{ZZ} obtained the functional inequalities of \eqref{E0} under some reasonable conditions.

In this paper, we will prove the Harnack and super Poincar\'{e} inequality for \eqref{1.1}, which cover the results in  \cite{ZZ} where $h$ is assumed to be $\frac{1}{2}$.

 The paper is organized as follows: In Section 2, we give main results on Harnack and super Poincar\'{e} inequality, which will be proved in Section 4 and Section 5 respectively; In Section 3, we give some lemmas which will be used in the sequel.
\section{Main Results}
\subsection{Harnack Inequality and Gradient Estimate}
As we know, the intrinsic metric associate to the generator of \eqref{1.1} is defined as
\begin{align}\label{rho}\rho (s,t) = \int_{s \wedge t}^{s \vee t} \frac{\d r}{r^{h}},\ \ s,t\in [0,\infty).
\end{align}
For any $f\in C^1([0,\infty))$, $x\in[0,\infty)$, define
$$\nabla^h f(x):=\lim_{y\to x}\frac{f(y)-f(x)}{\frac{1}{1-h}y^{1-h}-\frac{1}{1-h}x^{1-h}}=\frac{f'(x)}{x^{-h}}=x^hf'(x).$$
$\nabla^h$ is called the intrinsic gradient. Obviously, we have
$$|\nabla^h f(x)|=\lim_{\rho(y,x)\to 0}\frac{|f(y)-f(x)|}{\rho(y,x)}=x^h|f'(x)|.$$
The following theorem gives the result on Harnack inequality and estimate of intrinsic gradient $\nabla^h$.
\begin{thm}\label{T-Har}  Assume  $\frac{1}{2}<h<1$
 and $\alpha \geq \frac{h}{2}$. Then the following assertions hold.
\begin{enumerate}
\item[(1)] The Harnack inequality holds, i.e. for any $T>0$, $p>1$ and $x$, $y \in [0,\infty)$, it holds
 \beg{equation*}\beg{split}  (P_Tf)^p(y)\le  &P_Tf^p(x)
  \exp \left[ {\frac{p(\delta -\frac{h}{2})  (y^{1-h}-x^{1-h})^2  }{(p-1)(1-h)(\e^{2(1-h)(\delta -\frac{h}{2}) T}-1)}}\right], \ f\in \B^+_b([0,\infty)).
  \end{split}\end{equation*}
  Moreover, for any  $f\in \B^+_b([0,\infty))$ with $f>0$, the Log-Harnack inequality
  \beg{equation*}\beg{split}  P_T \log f(y)\le  &\log P_T f (x)
  +{\frac{(\delta -\frac{h}{2})  (y^{1-h}-x^{1-h})^2  }{(1-h)(\e^{2(1-h)(\delta -\frac{h}{2}) T}-1)}}
  \end{split}\end{equation*}
  holds.
\item[(2)] For any $f\in C^1_b([0,\infty)$, the estimate of the intrinsic gradient holds:
\begin{align*}
|\nabla^h P_Tf(x)|
&\leq \e^{-(1-h) (\delta -\frac{h}{2}) T}P_T |\nabla^ h f|(x),\ \ T>0,x\in[0,\infty).
\end{align*}
\end{enumerate}
 \end{thm}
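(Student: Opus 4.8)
Here is how I would approach Theorem~\ref{T-Har}.

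The plan is to remove the degeneracy by passing to the intrinsic coordinate and then to run the coupling by change of measure for the resulting one-dimensional equation, whose drift turns out to be dissipative precisely because $\aa\ge h/2$. Set $\Phi(x)=\ff{1}{1-h}x^{1-h}$, so that $\Phi'(x)=x^{-h}$ and $\rho(x,y)=|\Phi(x)-\Phi(y)|$, and let $Y_t:=\Phi(X_t)$. By It\^o's formula, as long as $X_t>0$,
\[
\d Y_t=b(Y_t)\,\d t+\d B_t ,\qquad b(y)=\aa\big((1-h)y\big)^{-h/(1-h)}-\ff{h}{2(1-h)y}-\dd(1-h)y .
\]
The crucial fact is the one-sided bound $b'(y)\le-\ll$ for all $y>0$, where $\ll:=(1-h)(\dd-h/2)$. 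Indeed, with $\bb:=h/(1-h)>1$ and $z:=(1-h)y$, a short computation reduces $b'(y)+\ll\le0$ to $\ff{h}{2}(z^{-2}-1)\le\aa\bb z^{-\bb-1}$; the left-hand side is $\le0$ when $z\ge1$, and when $0<z<1$ it suffices (using $\aa\ge h/2$) to observe that $z^{-2}-\bb z^{-\bb-1}=z^{-\bb-1}(z^{\bb-1}-\bb)<0$ since $z^{\bb-1}<1<\bb$. I expect $\aa\ge h/2$ to be exactly the threshold for this inequality on $(0,\8)$; this is the only place the hypothesis is used.

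For part~(1), fix $T>0$, $p>1$ and $x,y\in(0,\8)$ (the boundary values $x=0$ or $y=0$ are recovered afterwards). Let $X_t=X_t^x$ solve \eqref{1.1} with Brownian motion $B$, put $a:=\Phi(y)-\Phi(x)$, and let $\gamma_t:=-\ff{2\ll a\,\e^{\ll t}}{\e^{2\ll T}-1}$. Define $\bar U_t$ by $\d\bar U_t=\big(b(\bar U_t)+\gamma_t\big)\,\d t+\d B_t$, $\bar U_0=\Phi(y)$, up to the first time $\tau$ that $\bar U_t=\Phi(X_t)$, and put $\bar U_t=\Phi(X_t)$ for $t\ge\tau$; then $Y'_t:=\Phi^{-1}(\bar U_t)$ is the coupled process. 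On $[0,\tau)$ the difference $V_t=\bar U_t-\Phi(X_t)$ keeps the sign of $a$ and, by $b'\le-\ll$, satisfies $\d V_t\le(-\ll V_t+\gamma_t)\,\d t$ when $a>0$ (and the reverse inequality when $a<0$); since $\gamma$ is chosen so that the solution of $\dot v=-\ll v+\gamma_t$ with $v(0)=a$ satisfies $v(T)=0$, a comparison argument forces $\tau\le T$, and hence $Y'_T=X_T$ by pathwise uniqueness on $(0,\8)$. As $\gamma$ is bounded, Girsanov's theorem applies: under $\d\Q=R_T\,\d\P$ with $R_T=\exp(-\int_0^{\tau}\gamma_t\,\d B_t-\ff12\int_0^{\tau}\gamma_t^2\,\d t)$ the process $Y'$ solves \eqref{1.1} from $y$, so $P_Tf(y)=\E_\Q f(Y'_T)=\E\big(R_Tf(X_T^x)\big)$. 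H\"older's inequality gives $(P_Tf)^p(y)\le(\E R_T^{q})^{p-1}P_Tf^p(x)$ with $q=\ff p{p-1}$, and since $\int_0^{\tau}\gamma_t^2\,\d t\le\int_0^{T}\gamma_t^2\,\d t=\ff{2\ll a^2}{\e^{2\ll T}-1}$, the standard bound $\E R_T^q\le\exp\!\big(\ff{q(q-1)}{2}\int_0^{T}\gamma_t^2\,\d t\big)$ yields $(\E R_T^q)^{p-1}\le\exp\!\big(\ff{p\ll a^2}{(p-1)(\e^{2\ll T}-1)}\big)$, which is exactly the asserted constant once $a=\ff{y^{1-h}-x^{1-h}}{1-h}$ and $\ll=(1-h)(\dd-h/2)$ are substituted.

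The Log-Harnack inequality follows from the same coupling: $P_T\log f(y)=\E_\Q\log f(X_T)$, and Young's inequality $R_T\log\ff{f(X_T)}{P_Tf(x)}\le R_T\log R_T-R_T+\ff{f(X_T)}{P_Tf(x)}$ gives $P_T\log f(y)\le\log P_Tf(x)+\E(R_T\log R_T)$, where $\E(R_T\log R_T)=\ff12\E_\Q\int_0^{\tau}\gamma_t^2\,\d t\le\ff{\ll a^2}{\e^{2\ll T}-1}$, again exactly the asserted constant. For part~(2), write $g:=f\circ\Phi^{-1}$, so that $P_Tf=(Q_Tg)\circ\Phi$, $\nn^hP_Tf=(Q_Tg)'\circ\Phi$ and $|\nn^hf|=|g'|\circ\Phi$, where $Q_t$ denotes the semigroup of the $Y$-equation. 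Since that equation has constant diffusion coefficient, the derivative flow $J_t:=\pp_uY_t^u$ solves $\d J_t=b'(Y_t^u)J_t\,\d t$, whence $0<J_t=\exp\!\big(\int_0^t b'(Y_s^u)\,\d s\big)\le\e^{-\ll t}$; therefore $|(Q_Tg)'(u)|=|\E(g'(Y_T^u)J_T)|\le\e^{-\ll T}Q_T|g'|(u)$, and composing with $\Phi$ gives $|\nn^hP_Tf(x)|\le\e^{-(1-h)(\dd-h/2)T}P_T|\nn^hf|(x)$.

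The step I expect to be the main obstacle is the singularity of $b$ at $0$: $b(y)\to+\8$ like $\aa((1-h)y)^{-h/(1-h)}$ as $y\da0$, so $0$ is an entrance boundary and $X_t>0$ for every $t>0$ (this should appear among the lemmas of Section~3), but the coupling construction and the derivative-flow computation above are phrased for equations whose drift is defined on all of $\R$. To make them rigorous I would approximate $b$ by globally Lipschitz drifts $b_n$ that agree with $b$ on $[1/n,\8)$ and are extended below $1/n$ with slope $-\ll$ (so that $b_n'\le-\ll$ still holds), carry out all the estimates for the corresponding processes, and let $n\to\8$, using that these processes coincide with the true ones up to the hitting time of $1/n$, which increases to $+\8$ by non-attainability of $0$; alternatively one can localize inside the coupling by stopping at $\{\bar U_t\wedge\Phi(X_t)=1/n\}$ and pass to the limit. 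The boundary values $x=0$ or $y=0$ then follow by letting $x\da0$ or $y\da0$ and invoking continuity. Apart from this approximation, the remaining work---the It\^o computation, the elementary estimate for $b'$, the calculus-of-variations choice of $\gamma$, and the Girsanov bookkeeping---is routine.
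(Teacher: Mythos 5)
Your proposal is correct and takes a genuinely different route from the paper's. The paper works in the original coordinates: it applies It\^o's formula to the regularized intrinsic distance $\rho_\varepsilon(X_t,Y_t)=\int_{X_t\wedge Y_t}^{X_t\vee Y_t}(r+\varepsilon)^{-h}\,\d r$, splits the resulting drift into a linear dissipative piece and a remainder $M(X_t,Y_t,\varepsilon)$, then uses Lemma~\ref{L1} (the Lebesgue measure of $\{t:X_t=0\}$ vanishes) to pass $\varepsilon\to 0$ in the martingale term and Lemma~\ref{L2} (a finite--difference algebraic inequality) to control $\lim_{\varepsilon\to 0}M\le 0$. You instead change coordinates once and for all to $Y_t=\Phi(X_t)$, obtaining an additive--noise SDE, and replace Lemma~\ref{L2} by the sharper one--sided bound $b'\le -\lambda$ with $\lambda=(1-h)(\delta-h/2)$; your reduction of $b'\le-\lambda$ to $\tfrac h2(z^{-2}-1)\le\alpha\beta z^{-\beta-1}$ and the factorization $z^{-2}-\beta z^{-\beta-1}=z^{-\beta-1}(z^{\beta-1}-\beta)<0$ on $(0,1)$ are correct, and for part~(2) the derivative--flow bound $0<J_T\le \e^{-\lambda T}$ is cleaner than the paper's passage through \eqref{3.5''}. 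What the transformation buys is conceptual clarity (a dissipative additive--noise equation and a textbook Girsanov argument); what it costs is a singular drift $b$ at $y=0$, and here you implicitly need the stronger fact that $0$ is not attained (true here: near $0$ the scale density behaves like $\exp(\tfrac{2\alpha}{2h-1}x^{1-2h})\to\infty$, so $s(0^+)=-\infty$), whereas the paper's $\rho_\varepsilon$--regularization only uses the weaker Lemma~\ref{L1}. Your approximation-by-Lipschitz-drifts program is the right way to close that gap and is standard, but as written it remains a sketch; in contrast the paper's $\varepsilon$--regularization is fully carried out. One minor inaccuracy: your parenthetical remark that $\alpha\ge h/2$ is ``exactly the threshold'' for $b'\le-\lambda$ on $(0,\infty)$ is not right — optimizing $z^{\beta-1}-z^{\beta+1}$ over $z\in(0,1)$ shows the true threshold is $(1-h)^2(2h-1)^{(2h-1)/(2(1-h))}<h/2$ for $h\in(\tfrac12,1)$ — but this has no bearing on the validity of the proof, which only uses the (correct) implication $\alpha\ge h/2\Rightarrow b'\le-\lambda$.
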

\subsection{Super Poincar\'{e} inequality}
We firstly introduce some notations:
\begin{align*}
 &C(x)=\int_1^x\frac{\alpha-\delta y}{y^{2h}}\d y=\frac{2 \alpha }{1-2h} x^{1-2h}-\frac{\delta }{1-h} x^{2-2h}-\left(\frac{2 \alpha }{1-2h}-\frac{\delta }{1-h}\right),\ \ x>0,\\
 &\Gamma_0=2\e ^{-\left(\frac{2 \alpha }{1-2h}-\frac{\delta }{1-h}\right)},\\
 &Z=\int_0^\infty\frac{\e^{C(x)}}{\frac{1}{2}x^{2h}}\d x= \Gamma_0\int_{0}^{\infty} x^{-2h}\e^{\frac{2 \alpha }{1-2h} x^{1-2h}-\frac{\delta }{1-h} x^{2-2h}} \d x,
  \end{align*}
  and
$$ \mu (\mathrm{d}x)=\frac{\Gamma_0x^{-2h}\e^{\frac{2 \alpha }{1-2h} x^{1-2h}-\frac{\delta }{1-h} x^{2-2h}}}{Z} \d x=: \eta (x) \d x.$$
 Consider second-ordered differential operator on $L^2(\mu)$:
  $$L=\frac{1}{2}x^{2h}\frac{\mathrm{d^{2}}}{\mathrm{d}x^{2}}+(\alpha-\delta x)\frac{\mathrm{d}}{\mathrm{d}x}. $$
 Let $(\mathcal{E}, \mathcal{D}(\mathcal{E}))$ be the associated Dirichlet form to $L$ on $L^2(\mu)$. In particular, we have
$$\mathcal{E}(f,f)=\frac{1}{2} \int_{0}^{\infty} x^{2h}[f' (x)]^{2}\mu (\mathrm{d}x),\ \ f\in C_0^1([0,\infty)).$$
Let $\rho$ be defined in \eqref{rho}, then we have
$$\rho (x,y)=\int_{x \wedge y}^{x \vee y} \frac{\d r}{r^{h}}=\frac{1}{1-h}|y^{1-h}-x^{1-h} |,\ \ x,y\in[0,\infty).$$
For any open set $D \subset [0,\infty) $, the boundary measure of $D$ induced by $\mu$ is defined as $$\mu_{\partial}(\partial D):=\lim\limits_{\varepsilon \to 0} \frac{\mu(D_{\varepsilon })-\mu(D)}{\varepsilon }$$ with $D_{\varepsilon }=\{x \in [0,\infty) | \rho (x,D) \leq \varepsilon\}.$

The isoperimetric constant is defined as
$$k(r):=\inf_{\mu(D) \leq r} \frac{\mu_{\partial}(\partial D)}{\mu(D)},\ \ r>0.$$
\begin{thm}\label{T-Super} Assume $\frac{1}{2}<h<1$. Then the following assertions hold.
\begin{enumerate}
\item[(1)] The super Poincar\'{e} inequality
 \begin{align}\label{sp}\mu(f^2)\leq r\mathcal{E}(f,f)+\beta(r)\mu(|f|)^2, \ \ r>0, f\in \mathcal{D}(\mathcal{E})
 \end{align}
 holds for $\beta (r)=\frac{4}{k^{-1}(2\sqrt{2}r^{-\frac{1}{2}})}$ with $k^{-1}(r)=\sup\{s\geq 0, k(s)>r)\}$.
\item[(2)] Moreover, there exists constants $c,r_0>0$ such that
$k(r)\geq c(-\log r)^{\frac{1}{2}}$ for any $r\in(0,r_0)$. Thus, \eqref{sp} holds with $\beta (r)= \e^{C(1+r^{-1})}$ for some constant $C>0$.
\item[(3)] Finally, $\beta (r)$ in (2) is optimal in the following sense: the super Poincar\'{e} inequality can not hold for any $\beta(r)=\e^{C(1+r^{-\lambda})}$ with $0<\lambda<1$ and some constant $C>0$.
  \end{enumerate}
 %$\lim\limits_{r \to 0} r \log\beta (r)=0$.
 \end{thm}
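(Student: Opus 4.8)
\emph{The plan.}
I will treat the three assertions in turn: (1) is an instance of the general passage from an isoperimetric inequality to a super Poincar\'e inequality, while (2)--(3) hinge on sharp Watson/Laplace asymptotics for the density $\eta$. Throughout set $\phi(x):=\ff{2\alpha}{1-2h}x^{1-2h}-\ff{\delta}{1-h}x^{2-2h}$, so that $\eta(x)=\ff{\Gamma_0}{Z}x^{-2h}\e^{\phi(x)}$, and write $\zeta(x):=\eta(x)x^h$.

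\emph{Part (1).}
First I would make the boundary measure explicit. Writing an open $D\subset[0,\infty)$ as a disjoint union of intervals and using $\rho(x,b)=\ff1{1-h}|x^{1-h}-b^{1-h}|$, the $\rho$-$\vv$-collar of a finite endpoint $b$ has Euclidean length $\sim b^h\vv$, so $\mu_\partial(\partial(a,b))=\zeta(a)\mathbf 1_{\{a>0\}}+\zeta(b)\mathbf 1_{\{b<\infty\}}$, and for any $g\ge0$ one has the co-area identity
\[ \int_0^\infty x^h|g'(x)|\,\mu(\d x)=\int_0^\infty\mu_\partial(\partial\{g>t\})\,\d t . \]
Since $k$ is non-increasing and $\mu(\{g>t\})\le\mu(\{g>0\})$, the right side is $\ge k(\mu(\{g>0\}))\,\mu(g)$. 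Applying this to $g=((|f|-t)^+)^2$ (for which $\{g>0\}=\{|f|>t\}$) followed by Cauchy--Schwarz in $\mathcal E$ gives $\mu\big(((|f|-t)^+)^2\big)\le 8\,\mathcal E(f,f)/k(\mu(|f|>t))^2$. Combining this with $f^2\le 2(|f|\wedge t)^2+2((|f|-t)^+)^2$ and $t\,\mu(|f|\ge t)\le\mu(|f|)$, and choosing the truncation level $t$ so that $\mu(|f|>t)$ is essentially $k^{-1}(2\ss2\,r^{-1/2})$, yields \eqref{sp}; a careful accounting of the constants gives exactly $\beta(r)=\ff{4}{k^{-1}(2\ss2\,r^{-1/2})}$ as in \cite{W2}.

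\emph{Part (2).}
The substance is to estimate $k(r)$ for small $r$. Since $\tfrac12<h<1$, $\phi$ is increasing near $0$ with $\phi(x)\sim-\ff{2\alpha}{2h-1}x^{-(2h-1)}\to-\infty$ and decreasing near $\infty$ with $\phi(x)\sim-\ff{\delta}{1-h}x^{2-2h}\to-\infty$; hence $\eta$ is unimodal with $\eta(0^+)=\eta(\infty)=0$, so $\mu$ concentrates in a fixed compact set. As $\mu_\partial(\partial D)/\mu(D)\ge\min_j\mu_\partial(\partial I_j)/\mu(I_j)$ over the components $I_j$ of $D$, it suffices to bound the ratio below over intervals $I$ with $\mu(I)\le r$. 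If $I$ meets a fixed bulk set the ratio is $\gtrsim r^{-1}$; otherwise $I=(a,b)$ sits near $0$ or near $\infty$, and a Watson-type estimate gives $\mu((0,b))\sim\zeta(b)b^h/(2\alpha)$ as $b\to0$ and $\mu((a,\infty))\sim\zeta(a)a^{h-1}/(2\delta)$ as $a\to\infty$, so the ratio for $(0,b)$ is $\sim2\alpha b^{-h}$ and for $(a,\infty)$ is $\sim2\delta a^{1-h}$. Inverting $r=\mu((0,b))$, resp.\ $r=\mu((a,\infty))$, yields $2\alpha b^{-h}\gtrsim(-\log r)^{h/(2h-1)}$ and $2\delta a^{1-h}\sim2\ss{\delta(1-h)}\,(-\log r)^{1/2}$; as $\ff{h}{2h-1}>1>\ff12$, the half-lines near $\infty$ are the bottleneck and $k(r)\ge c(-\log r)^{1/2}$ for $r$ small. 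Finally $k(s)>2\ss2\,r^{-1/2}$ once $s<\e^{-8/(c^2r)}$, so $k^{-1}(2\ss2\,r^{-1/2})\ge\e^{-8/(c^2r)}$, whence $\beta(r)\le 4\e^{8/(c^2r)}\le\e^{C(1+r^{-1})}$ for $C$ large.

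\emph{Part (3).}
To rule out $\beta(r)=\e^{C(1+r^{-\lambda})}$ with $0<\lambda<1$, I would test \eqref{sp} against an explicit function. For large $a_0$ put $a_1=a_0+\vv_0a_0^{2h-1}$ with $\vv_0>0$ small and fixed, and let $f$ equal $0$ on $[0,a_0]$, $\rho(x,a_0)/\rho(a_1,a_0)$ on $[a_0,a_1]$, and $1$ on $[a_1,\infty)$; then $x^{2h}f'(x)^2\equiv\rho(a_1,a_0)^{-2}$ on $(a_0,a_1)$, so $\mathcal E(f,f)=\mu((a_0,a_1))/(2\rho(a_1,a_0)^2)$. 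The width is chosen so that $\eta$ and $\mu((\cdot,\infty))$ stay comparable across $(a_0,a_1)$: then $\rho(a_1,a_0)\asymp a_0^{h-1}$, $\mu((a_0,a_1))\asymp\eta(a_0)a_0^{2h-1}$, hence $\mathcal E(f,f)\asymp\eta(a_0)a_0$, $\mu(f^2)\ge\mu((a_1,\infty))\asymp\eta(a_0)a_0^{2h-1}$, and $\mu(|f|)\le\mu((a_0,\infty))\asymp\eta(a_0)a_0^{2h-1}$. Inserting these in \eqref{sp} and taking $r=r(a_0)\asymp a_0^{2h-2}$ — so that $r\,\mathcal E(f,f)$ absorbs at most half of $\mu(f^2)$ — forces $\beta(r)\gtrsim(\eta(a_0)a_0^{2h-1})^{-1}=\ff{Z}{\Gamma_0}a_0\e^{-\phi(a_0)}\ge\e^{c_1a_0^{2-2h}}$. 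Since $a_0^{2-2h}\asymp r^{-1}$ this reads $\beta(r)\gtrsim\e^{c'/r}$ for $r$ small, which grows strictly faster than $\e^{C(1+r^{-\lambda})}$ as $r\to0$ when $\lambda<1$; contradiction. (One may also argue via the converse of the criterion in part (1): such a $\beta$ would force $k(s)\gtrsim(-\log s)^{1/(2\lambda)}$, against the matching upper bound $k(r)\lesssim(-\log r)^{1/2}$ furnished by the sets $(a,\infty)$.)

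\emph{Main obstacle.}
Part (1) is bookkeeping once the boundary measure and co-area identity are recorded. The genuine work is in (2)--(3): making the reduction of $k(r)$ to near-extremal half-lines rigorous — excluding that thin or two-sided intervals could beat the rate $(-\log r)^{1/2}$ — and controlling the remainders in the Watson/Laplace asymptotics for $\mu((0,b))$, $\mu((a,\infty))$, $\mu((a_0,a_1))$ and $\rho(a_1,a_0)$ precisely enough to pin down the exponent $2-2h$, equivalently the power $\tfrac12$ in part (2) and the power $1$ in parts (2)--(3).
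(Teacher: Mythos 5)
Your proposal is correct, and parts (1)--(2) follow essentially the same path as the paper: the reduction of the isoperimetric constant to half-lines $(a,\infty)$ near infinity, the Laplace asymptotics $\mu((a,\infty))\sim\frac{1}{2\delta}a^{h}\eta(a)\cdot a^{h-1}$ giving $k(r)\sim c\sqrt{-\log r}$, and the citation of \cite[Theorem 3.4.16, Corollary 3.4.17]{W2} to pass to the super Poincar\'e inequality (you spell out the co-area proof of the former rather than citing it, which is fine). The paper is more careful than you are about the reduction to half-lines -- it proves a dedicated lemma showing $\mu_\partial(\partial(0,x_1))>\mu_\partial(\partial(x_2,\infty))$ when $\mu((0,x_1))=\mu((x_2,\infty))$ is small, and then computes $\Phi'(y_1)$ explicitly for the two-component family $(0,y_1)\cup(y_2,\infty)$ to pin down where the minimum sits -- whereas you argue informally via the exponent comparison $h/(2h-1)>1/2$; the substance is the same, and you flag the gap yourself.

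Part (3) is where your route genuinely differs. The paper invokes \cite[Corollary 3.3.22]{W2}: it checks that $\rho(0,\cdot)$ has unit Dirichlet-form Lipschitz constant $L_{\mathcal E}(\rho(0,\cdot))\le 1$, and then rules out $\beta(r)=\e^{C(1+r^{-\lambda})}$, $\lambda\in(\frac12,1)$, by showing $\mu\bigl(\exp\{\vv\rho(0,\cdot)^{2\lambda/(2\lambda-1)}\}\bigr)=\infty$ (using $\frac{2\lambda}{2\lambda-1}>2>2-2h$), and similarly handles $\lambda\le\frac12$ by $\|\rho(0,\cdot)\|_\infty=\infty$. You instead run a direct test-function computation: a ramp $f$ interpolating from $0$ to $1$ across $(a_0,a_1)$ with $a_1-a_0\asymp a_0^{2h-1}$, for which $\mathcal E(f,f)\asymp\eta(a_0)a_0$, $\mu(f^2)\asymp\mu(|f|)\asymp\eta(a_0)a_0^{2h-1}$, so that taking $r\asymp a_0^{2h-2}$ forces $\beta(r)\gtrsim\e^{c a_0^{2-2h}}\asymp\e^{c'/r}$. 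Both are correct; your version is more elementary and self-contained (it exhibits the obstruction concretely rather than filtering it through Wang's general concentration criterion), and it also hands you the matching upper bound $k(r)\lesssim(-\log r)^{1/2}$ for free, which dovetails nicely with part~(2). The paper's approach is shorter on the page because it offloads the work to the cited corollary. If you were to write your version up in full, the only place to be careful is justifying that the ramp $f$ (which equals $1$ near $\infty$) lies in $\mathcal D(\mathcal E)$; this holds because the generator is conservative, but it should be said.
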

\section{Some Preparations}
In this section, we give two important lemmas which will be used in the proof of Theorem \ref{T-Har}.
\beg{lem}\label{L1} Assume $\frac{1}{2}<h<1$ and SDE
\begin{equation}\nonumber
\d X_t=b(X_t)\d t+X_t^{h} \d B_t, \ \ X_0=x>0
\end{equation}
has a non-explosive and non-negative solution $X_t$. Here, $b:[0,\infty) \rightarrow \R$ is locally bounded and continuous at point $0$, $b(0)>0$. Then $\P$-a.s.
  \begin{equation}\nonumber
  \int_0^\infty I_{\{0\}} (X_t) \d t= 0.
  \end{equation}
 \end{lem}
 \begin{proof}
For any $n\geq 1$, construct function $\varphi_{n}:[0,\infty) \rightarrow \R$ as follows£º
 \begin{eqnarray}\nonumber
\varphi_{n} (x)=
\begin{cases}
\frac{1}{2(h+1)n},&{x\geq \frac{1}{n} }, \\
 \frac{n^{2h+1}}{2(h+1)}(\frac{1}{n}-x)^{2(h+1)}+\frac{1}{2(h+1)n},&{0 \leq x<\frac{1}{n}}.\\
\end{cases}
\end{eqnarray}
It is not difficult to see
 \begin{equation}\label{1.2}
| \varphi_{n} (x)| \leq \frac{1}{h+1},\ \ | \varphi'_{n} (x)| \leq 1,\ \ | \varphi''_{n} (x)\cdot x^{2h}|=\frac{2h+1}{n^{2h-1}} \leq 2h+1,
 \end{equation}
 and
 \begin{equation}\label{1.3}
\lim\limits_{n \to \infty} \varphi_{n} (x)=0,\ \ \lim\limits_{n \to \infty}  \varphi'_{n} (x)=-I_{\{0\}} (x),\ \ \lim\limits_{n \to \infty}| \varphi''_{n} (x)\cdot x^{2h}|=0.
\end{equation}
Letting $\tau_{m}$=$\inf$ $\{$ $t$ $\geq$ 0$ : X_t \geq m\}$, since $X_t$ is non-explosive, then we have $\tau_{\infty}:=\lim\limits_{m \to \infty} \tau_{m}=\infty$. Applying It\^{o}'s formula to $\varphi_{n} (X_t)$, we arrive at
\begin{equation}\label{1.4}
\d \varphi_{n} (X_t)=\varphi'_{n} (X_t) b(X_t) \d t+\frac{1}{2} \varphi''_{n} (X_t) X_t^{2h} \d t+\varphi'_{n} (X_t)X_t^{h} \d B_t.
\end{equation}
This implies
\beq\label{1.5}\begin{split}
\varphi_{n} (X_{t \wedge \tau_{m}})&=\varphi_{n} (x)+\int_{0}^{t \wedge \tau_{m}} \varphi'_{n} (X_s) b(X_s) \d s\\
&+\frac{1}{2}\int_{0}^{t \wedge \tau_{m}} \varphi''_{n} (X_s) X_s^{2h} \d s+\int_{0}^{t \wedge \tau_{m}} \varphi'_{n} (X_s)X_s^{h} \d B_s.
\end{split}\end{equation}
Combining the definition of $\tau_m$ and taking expectations in \eqref{1.5}, we obtain
\beq\nonumber
\E \big(\varphi_{n} (X_{t \wedge \tau_{m}})\big)-\varphi_{n} (x)=\E \int_{0}^{t \wedge \tau_{m}} \varphi'_{n} (X_s) b(X_s) \d s+ \frac{1}{2}\E\int_{0}^{t \wedge \tau_{m}} \varphi''_{n} (X_s) X_s^{2h} \d s.
\end{equation}
Thus, \eqref{1.2}-\eqref{1.3} and dominated convergence theorem yield
\begin{align}\label{lim}
\lim\limits_{n \to \infty} \E \int_{0}^{t \wedge \tau_{m}} \varphi'_{n} (X_s) b(X_s) \d s=0.
\end{align}
Since $b$ is locally bounded, there exists $M>0$ such that
$$\big| \varphi'_{n} (x)b(x)\big| \leq \sup_{x \in [0,\frac{1}{n})}\big|b(x)\big| \leq \sup_{x \in [0,1)}\big|b(x)\big| \leq M.$$
Mover, it is clear
$$\lim\limits_{n \to \infty}  \varphi'_{n} (x) b(x)=-I_{\{0\}}(x) b(0).$$
So, this together with dominated convergence theorem and \eqref{lim} implies
  $$\lim\limits_{n \to \infty} \E \int_{0}^{t \wedge \tau_{m}} \varphi'_{n} (X_s) b(X_s) \d s=\E  \int_{0}^{t \wedge \tau_{m}} I_{\{0\}}(X_{s}) b(0) \d s=0,$$
  which yields $\E \int_{0}^{t \wedge \tau_{m}} I_{\{0\}} (X_{s})\d s=0$ due to $b(0)>0$. Letting firstly $t$ goes to $\infty$ and then $m$ tends to $\infty$, we have $\E \int_{0}^{\infty} I_{\{0\}} (X_{s})\d s=0$. Thus, we have $\P$-a.s. $\int_{0}^{\infty} I_{\{0\}} (X_{s}) \d s=0$.
 \end{proof}

 \beg{lem}\label{L2} Let $\frac{1}{2}<h<1$ and $\alpha \geq \frac{h}{2}$. Then for any $x$, $y \in [0,\infty)$ with $x<y$, we have
\begin{equation}\label{1.6}
\alpha \left(\frac{1}{y^{h}}-\frac{1}{x^{h}}\right)+\frac{h}{2} \left(\frac{1}{x^{1-h}}-\frac{1}{y^{1-h}}+x^{1-h}-y^{1-h}\right) \leq 0.
\end{equation}
 \end{lem}
 \begin{proof}
% \eqref{1.6} is equivalent to $f(x,y):=\alpha (\frac{1}{y^{h}}-\frac{1}{x^{h}})+\frac{h}{2} (\frac{1}{x^{1-h}}-\frac{1}{y^{1-h}}) \leq \frac{h}{2} (y^{1-h}-x^{1-h})$
We divide the proof into two cases.
\begin{enumerate}
\item[(1)] Case 1: $0 \leq x<1$.
\begin{enumerate}
\item[(i)] $y<1$. Consider function $w(z)=\frac{1}{z^{1-h}}-\frac{1}{z^{h}}$, $z>0$. The derivative of $w$ is $$w'(z)= \frac{h-(1-h)z^{2h-1}}{z^{1+h}}.$$ Letting $$z_0=\left(\frac{h}{1-h}\right)^{\frac{1}{2h-1}},$$ then we have $w'(z_0)=0$. Noting that $z_0>1$ due to $h\in(\frac{1}{2},1)$, $w$ is strictly increasing on $[0,1)$. Since $0\leq x<y\leq 1$, we obtain $w(x)<w(y)$, i.e. $$\frac{1}{x^{1-h}}-\frac{1}{y^{1-h}}+\frac{1}{y^{h}}-\frac{1}{x^{h}}<0.$$
This together with $\frac{1}{2}<h<1$, $\alpha \geq \frac{h}{2}$ and $x<y$ implies
\begin{align*}
&\alpha \left(\frac{1}{y^{h}}-\frac{1}{x^{h}}\right)+\frac{h}{2} \left(\frac{1}{x^{1-h}}-\frac{1}{y^{1-h}}+x^{1-h}-y^{1-h}\right)\\
&=\left(\alpha -\frac{h}{2}\right)\left(\frac{1}{y^{h}}-\frac{1}{x^{h}}\right)+\frac{h}{2} \left(\frac{1}{x^{1-h}}-\frac{1}{y^{1-h}}+\frac{1}{y^{h}}-\frac{1}{x^{h}}\right)+\frac{h}{2} \left(x^{1-h}-y^{1-h}\right)<0.
\end{align*}
\item[(ii)]  $y>1$. Since $\frac{1}{2}<h<1$, we have $\frac{1}{y^{h}}<\frac{1}{y^{1-h}}$. By the same reason, it holds $\frac{1}{x^{h}}>\frac{1}{x^{1-h}}$ due to $x<1$. Thus, $$ \frac{1}{x^{1-h}}-\frac{1}{y^{1-h}}+\frac{1}{y^{h}}-\frac{1}{x^{h}}<0.$$
Again thanks to $\frac{1}{2}<h<1$, $\alpha \geq \frac{h}{2}$ and $x<y$, we obtain
\begin{align*}
&\alpha \left(\frac{1}{y^{h}}-\frac{1}{x^{h}}\right)+\frac{h}{2} \left(\frac{1}{x^{1-h}}-\frac{1}{y^{1-h}}+x^{1-h}-y^{1-h}\right)\\
&=\left(\alpha -\frac{h}{2}\right)\left(\frac{1}{y^{h}}-\frac{1}{x^{h}}\right)+\frac{h}{2} \left(\frac{1}{x^{1-h}}-\frac{1}{y^{1-h}}+\frac{1}{y^{h}}-\frac{1}{x^{h}}\right)+\frac{h}{2} \left(x^{1-h}-y^{1-h}\right)<0.
\end{align*}
\end{enumerate}
\item[(2)] Case 2: $x \geq 1$. Firstly, we have $y>1$ due to $x<y$. So, we get from $\frac{1}{2}<h<1$ and $x<y$ that
\begin{align*}
&\alpha \left(\frac{1}{y^{h}}-\frac{1}{x^{h}}\right)+\frac{h}{2} \left(\frac{1}{x^{1-h}}-\frac{1}{y^{1-h}}+x^{1-h}-y^{1-h}\right)\\
&\leq \frac{h}{2} \left(\frac{y^{1-h}-x^{1-h}}{x^{1-h}y^{1-h}} +\left(x^{1-h}-y^{1-h}\right)\right)\\
&\leq\frac{h}{2} \left(y^{1-h}-x^{1-h}+\left(x^{1-h}-y^{1-h}\right)\right)=0.
\end{align*}
\end{enumerate}
Thus, we complete the proof.
 \end{proof}
With the above two lemmas in hand, we finish the proof of Theorem \ref{T-Har} below.
\section{Proof of Theorem \ref{T-Har}}
We use the coupling by change of measure to derive the Harnack inequality.
\begin{proof}[Proof of Theorem \ref{T-Har}]

(1) Fix $T>0$. For any $x,y \in [0,\infty)$, without loss of generality, we may assume that $y > x$. Let $X_t$ solve \eqref{1.1}  with $X_0= x$,
and $Y_t$ solve the equation
\beq\label{3.1} \d Y_t=(\alpha-\delta Y_t)\d t+Y_t^{h} \d B_t-I_{[0,\tau)} \xi (t) Y_t^{h} \d t\end{equation}
with
$Y_0= y$, here $$\xi (t) := \frac{2(\delta-\frac{h}{2})  (y^{1-h}-x^{1-h}) \e^{(1-h)(\delta -\frac{h}{2} )t} }{(\e^{2(1-h)(\delta-\frac{h}{2}) T}-1)},\ \ t \geq 0,$$ and $\tau :=\inf \{t \geq 0:X_t=Y_t\}$, which is the coupling time. Let $Y_t=X_t$ for $t\geq \tau$. We will prove $\tau<T$.

For any $\varepsilon > 0$, let $$\rho_{\varepsilon} (s,t) = \int_{s \wedge t}^{s \vee t} \frac{\d r}{(r+ \varepsilon)^{h}},\ \ s,t\in [0,\infty).$$
Applying It\^{o}'s formula to $\rho_{\varepsilon} (X_t,Y_t)$, we have
\begin{equation}\begin{split}\label{3.2}
&\d \rho_{\varepsilon} (X_t,Y_t)\\
&=\frac{\partial \rho_{\varepsilon} (X_t,Y_t)}{\partial x} \d X_t+\frac{\partial \rho_{\varepsilon} (X_t,Y_t)}{\partial y} \d Y_t+\frac{1}{2} \frac{\partial^{2} \rho_{\varepsilon} (X_t,Y_t)}{\partial x^{2}} \d\< X\>_t+\frac{1}{2} \frac{\partial^{2} \rho_{\varepsilon} (X_t,Y_t)}{\partial y^{2}} \d \<Y\>_t\\
&+\frac{\partial^{2} \rho_{\varepsilon} (X_t,Y_t)}{\partial x \partial y} \d \<X,Y\>_t\\
&=-\frac{\d X_t}{(X_t+ \varepsilon)^{h}}+\frac{\d Y_t}{(Y_t+ \varepsilon)^{h}}+\frac{h X_t^{2h} \d t}{2(X_t+ \varepsilon)^{h+1}}-\frac{h Y_t^{2h} \d t}{2(Y_t+ \varepsilon)^{h+1}}\\
&=\left(\frac{Y_t^{h}}{(Y_t+ \varepsilon)^{h}}-\frac{X_t^{h}}{(X_t+ \varepsilon)^{h}}\right) \d B_t- \delta \left((Y_t+ \varepsilon)^{1-h}-(X_t+ \varepsilon)^{1-h}\right) \d t-\frac{ \xi (t) Y_t^{h}}{(Y_t+ \varepsilon)^{h}} \d t\\
&+(\delta \varepsilon +\alpha)\left(\frac{1}{(Y_t+ \varepsilon)^{h}}-\frac{1}{(X_t+ \varepsilon)^{h}}\right) \d t+\frac{h}{2} \left(\frac{X_t^{2h}}{(X_t+ \varepsilon)^{h+1}}-\frac{Y_t^{2h}}{(Y_t+ \varepsilon)^{h+1}}\right) \d t, \ t<\tau.
\end{split}\end{equation}
Combining the definition of $\rho_\varepsilon$, we arrive at
\begin{equation}\begin{split}\label{3.3}
&\d \left[\frac{1}{1-h} \left((Y_t+ \varepsilon)^{1-h}-(X_t+ \varepsilon)^{1-h}\right)\right]\\
&=\left(\frac{Y_t^{h}}{(Y_t+ \varepsilon)^{h}}-\frac{X_t^{h}}{(X_t+ \varepsilon)^{h}}\right) \d B_t- \left(\delta-\frac{h}{2}\right) \left((Y_t+ \varepsilon)^{1-h}-(X_t+ \varepsilon)^{1-h}\right) \d t\\
&-\frac{ \xi (t) Y_t^{h}}{(Y_t+ \varepsilon)^{h}} \d t+
M(X_t,Y_t,\varepsilon) \d t,\ \ t<\tau,
\end{split}\end{equation}
where
\begin{align*}&M(X_t,Y_t,\varepsilon)=(\delta \varepsilon +\alpha)\left(\frac{1}{(Y_t+ \varepsilon)^{h}}-\frac{1}{(X_t+ \varepsilon)^{h}}\right)\\
&+\frac{h}{2}\left[\left(\frac{X_t^{2h}}{(X_t+ \varepsilon)^{h+1}}-\frac{Y_t^{2h}}{(Y_t+ \varepsilon)^{h+1}}\right)\right]-\frac{h}{2}\left((Y_t+ \varepsilon)^{1-h}-(X_t+ \varepsilon)^{1-h}\right).
\end{align*}
It follows from \eqref{3.3} that
\begin{equation}\begin{split}\label{3.4}
&\frac{1}{1-h} \e^{(1-h) (\delta -\frac{h}{2}) (\tau \wedge T)} \left((Y_{\tau \wedge T}+ \varepsilon)^{1-h}-(X_{\tau \wedge T}+ \varepsilon)^{1-h}\right)+ \int_{0}^{\tau \wedge T} \frac{\e^{(1-h) (\delta -\frac{h}{2})t} \xi (t) Y_t^{h}}{(Y_t+ \varepsilon)^{h}} \d t\\
&=\frac{1}{1-h}\left((y+ \varepsilon)^{1-h}-(x+ \varepsilon)^{1-h}\right)+ \int_{0}^{\tau \wedge T} \e^{(1-h) (\delta -\frac{h}{2}) t} \left(\frac{Y_t^{h}}{(Y_t+ \varepsilon)^{h}}-\frac{X_t^{h}}{(X_t+ \varepsilon)^{h}}\right) \d B_t\\
&+\int_{0}^{\tau \wedge T} \e^{(1-h) (\delta -\frac{h}{2}) t} M(X_t,Y_t,\varepsilon) \d t.
\end{split}\end{equation}
Let
\begin{equation}\begin{split}\nonumber
I_{1}:&= \lim\limits_{\varepsilon \to 0} \E \left| \int_{0}^{\tau \wedge T} \e^{(1-h) (\delta -\frac{h}{2}) t} \Big(\frac{Y_t^{h}}{(Y_t+ \varepsilon)^{h}}-\frac{X_t^{h}}{(X_t+ \varepsilon)^{h}}\Big) \d B_t\right|^2\\
      & =\lim\limits_{\varepsilon \to 0} \E  \int_{0}^{\tau \wedge T} \e^{2(1-h) (\delta -\frac{h}{2}) t} \Big(\frac{Y_t^{h}}{(Y_t+ \varepsilon)^{h}}-\frac{X_t^{h}}{(X_t+ \varepsilon)^{h}}\Big)^{2} \d t,
\end{split}\end{equation}
By Lemma \ref{L1} and $Y_s\geq X_s$, we have $\P$-a.s
\begin{equation}\nonumber
\int_{0}^{\infty} I_{\{X_{s}=0\}} \d s=0,\ \ \int_{0}^{\infty} I_{\{Y_{s}=0\}} \d s=0.
\end{equation}
This implies
\begin{equation}\nonumber
I_{1}\leq \E \left ( \int_{0}^{T} \e^{2(1-h) (\delta -\frac{h}{2}) t}(I_{\{Y_{t} \neq 0\}}-I_{\{X_{t} \neq 0\}})^{2} \d t\right )=0.
\end{equation}
Since $X$ and $Y$ are continuous, by dominated convergence theorem and Lemma \ref{L2}, we obtain \begin{align*}&\lim_{\varepsilon\to 0}\int_{0}^{\tau \wedge T} \e^{(1-h) (\delta -\frac{h}{2}) t} M(X_t,Y_t,\varepsilon) \d t\\
&= \int_{0}^{\tau \wedge T}\e^{(1-h) (\delta -\frac{h}{2}) t}\lim_{\varepsilon\to 0} M(X_t,Y_t,\varepsilon) \d t\\
&=\int_{0}^{\tau \wedge T}\left(\alpha\left(\frac{1}{Y_t^{h}}-\frac{1}{X_t^{h}}\right)+\frac{h}{2}\left[\left(\frac{X_t^{2h}} {X_t^{h+1}}-\frac{Y_t^{2h}}{Y_t^{h+1}}\right)-\left(Y_t^{1-h}-X_t^{1-h}\right)\right]\right)\d t\\
&\leq 0.
\end{align*}
Thus, letting $\varepsilon$ go to $0$ in \eqref{3.4}, it holds $\P$-a.s.
\begin{equation}\begin{split}\label{3.5}
&\int_{0}^{\tau \wedge T} \e^{(1-h) (\delta -\frac{h}{2}) t} \xi (t)  \d t+\frac{1}{1-h}\e^{(1-h) (\delta -\frac{h}{2}) (\tau \wedge T)} (Y_{\tau \wedge T}^{1-h}-X_{\tau \wedge T}^{1-h})\leq \frac{1}{1-h} (y^{1-h}-x^{1-h}).
\end{split}\end{equation}
On the other hand, by the definition of $\xi(t)$, it is easy to see
\begin{align}\label{xi}\int_{0}^{\tau \wedge T} \e^{(1-h) (\delta -\frac{h}{2}) t} \xi (t)  \d t=\frac{(y^{1-h}-x^{1-h})(\e^{2(1-h)(\delta -\frac{h}{2}) (\tau \wedge T)}-1)}{(1-h)(\e^{2(1-h)(\delta -\frac{h}{2}) T}-1)}.
\end{align}
This and \eqref{3.5} imply $\P(\tau >T)=0$. In fact, if $\P(\tau >T)>0$, considering \eqref{3.5} on the set $\{\tau>T\}$, we have
$$\frac{1}{1-h} (y^{1-h}-x^{1-h})+\frac{1}{1-h} \e^{(1-h) (\delta -\frac{h}{2}) T} (Y_{T}^{1-h}-X_{T}^{1-h}) \leq \frac{1}{1-h} (y^{1-h}-x^{1-h}).$$ This is impossible, and $\P(\tau \leq T)=1$.

Let $$R = \exp\left[\int_{0}^{\tau} \xi (t)  \d B_t-\frac{1}{2}\int_{0}^{\tau} \xi^{2} (t)  \d t\right].$$
By Girsanov's theorem, under the probability $\d \Q:=R\d \P$, the process$$\widetilde{B_{t}} =B_{t}- \int_{0}^{t} 1_{[0,\tau)}(s)\xi (s)  \d s,\ \ t \geq 0$$ is a one-dimensional Brownian motion. Rewrite the equation for $Y_{t}$ as$$\d Y_t=(\alpha-\delta Y_t)\d t+Y_t^{h} \d \widetilde{B_{t}} ,\ \ Y_0=y.$$
We see that the distribution of $Y$ under $\Q$ coincides with that of $X^{y}$ under $\P$. Moreover,  $\Q$-a.s. $X_T=Y_T$.
Thus, $$P_T f(y)=\E^{\Q}(f(Y_T))=\E^{\Q}(f(X_T))=\E(Rf(X_T)).$$
By H\"{o}lder's inequality, we have
\begin{align}\label{Holder}
(P_T f(y))^{p}\leq (\E(R^{p/(p-1)}))^{p-1} \cdot\E(f^{p}(X_T))=P_T f^{p}(x) \cdot (\E(R^{p/(p-1)}))^{p-1}.
\end{align}
On the other hand, from the definition of $R$ and $\xi(t)$, we arrive at
\begin{align*}
\E(R^{p/(p-1)}) &\leq \exp\left[\frac{p}{2(p-1)^{2}}\int_{0}^{T} \xi^{2} (t)  \d t\right]\\
&\times \E \left(\exp\left[\frac{p}{p-1}\int_{0}^{\tau} \xi (t)  \d B_t-\frac{p^{2}}{2(p-1)^{2}}\int_{0}^{\tau} \xi^{2} (t)  \d t\right]\right)\\
&\leq \exp \left[\frac{p}{2(p-1)^{2}}\int_{0}^{T} \xi^{2} (t)  \d t\right]\\
&=\exp \left[ {\frac{p(\delta -\frac{h}{2})  (y^{1-h}-x^{1-h})^2  }{(p-1)^{2}(1-h)(\e^{2(1-h)(\delta -\frac{h}{2}) T}-1)}}\right] .
\end{align*}
This together with \eqref{Holder} yields
\begin{equation}\nonumber
(P_T f(y))^p\leq (P_T f^p(x)) \exp \left[ {\frac{p(\delta -\frac{h}{2})  (y^{1-h}-x^{1-h})^2  }{(p-1)(1-h)(\e^{2(1-h)(\delta -\frac{h}{2}) T}-1)}}\right],\ \ f\in \B^+_b([0,\infty)).
 \end{equation}
Similarly, we have $$P_T \log f(y)=\E^{\Q}( \log f(Y_T))=\E^{\Q}( \log f(X_T))=\E(R \log f(X_T)).$$
Young's inequality implies  $$\E(R \log f(X_T)) \leq \E(R \log  R)+ \log \ \E(f(X_T))=\E(R \log R)+ \log (P_T f(x)).$$
It is not difficult to  see that
\begin{equation}\begin{split}\nonumber
&\E(R \log  R)=\E^{\Q}\log  R\\
&=\E^{\Q}\left(\int_{0}^{\tau} \xi (t)  \d B_t-\frac{1}{2}\int_{0}^{\tau} \xi^{2} (t)  \d t\right)\\
&=\E^{\Q}\left(\int_{0}^{\tau} \xi (t)  \d \widetilde{B_t}+\int_{0}^{\tau} \xi^{2} (t)  \d t-\frac{1}{2}\int_{0}^{\tau} \xi^{2} (t)  \d t\right)\\
&=\frac{1}{2}\E^{\Q}\left(\int_{0}^{\tau} \xi^{2} (t)  \d t\right)\leq {\frac{(\delta -\frac{h}{2})  (y^{1-h}-x^{1-h})^2  }{(1-h)(\e^{2(1-h)(\delta -\frac{h}{2}) T}-1)}}.
\end{split}\end{equation}
Thus, the log-Harnack inequality holds, i.e.
$$P_T \log f(y)\leq \log P_T f(x)) +{\frac{(\delta -\frac{h}{2})  (y^{1-h}-x^{1-h})^2  }{(1-h)(\e^{2(1-h)(\delta -\frac{h}{2}) T}-1)}},\ \ f>0,f \in \B^{+}_b([0,\infty)).$$
(2) Repeat the proof of (1) with $\xi(t)=0$ and $\tau=\infty$. From \eqref{3.5}, we arrive at
\begin{equation}\begin{split}\label{3.5'}
&\e^{(1-h) (\delta -\frac{h}{2}) T} \frac{1}{1-h}(Y_{T}^{1-h}-X_{T}^{1-h})\leq \frac{1}{1-h} (y^{1-h}-x^{1-h}),
\end{split}\end{equation}
which means
\begin{equation}\begin{split}\label{3.5''}
&\rho(X_T^y,X_T^x)\leq \e^{-(1-h) (\delta -\frac{h}{2}) T} \rho(y,x).
\end{split}\end{equation}
Thus, for any $f\in C^1_b([0,\infty)$, we have
\begin{align*}
|\nabla^h P_Tf(x)|&=\lim_{\rho(y,x)\to 0}\frac{|P_Tf(y)-P_Tf(x)|}{\rho(y,x)}\\
&=\lim_{\rho(y,x)\to 0}\frac{|\E f(X_T^y)-\E f(X_T^x)|}{\rho(y,x)}\\
&=\lim_{\rho(y,x)\to 0}\frac{|\E f(X_T^y)-\E f(X_T^x)|}{\rho(X_T^y,X_T^x)}\frac{\rho(X_T^y,X_T^x)}{\rho(y,x)}\\
&\leq \e^{-(1-h) (\delta -\frac{h}{2}) T}P_T |\nabla^ h f|(x).
\end{align*}
\end{proof}
\begin{rem}\label{hge} In \cite{ZZ}, i.e. $h=\frac{1}{2}$, as $\varepsilon$ goes to $0$, the first and second term in $M(X_t,Y_t,\varepsilon)$ can be non-positive if $\alpha\geq \frac{1}{4}$. However, it does not hold when $h\in(\frac{1}{2},1)$, and this is why we construct $M(X_t,Y_t,\varepsilon)$ as in the proof of Theorem \ref{T-Har}.
\end{rem}
\section{Proof of Theorem \ref{T-Super}}
In this section, we use isoperimetric constant to derive the super Poincar\'{e} inequality.
\begin{lem}\label{infty} There exists a small enough constant $r_0\in(0,1)$ such that for any $x_1,x_2>0$  satisfying $\mu((0,x_1))=\mu((x_2,\infty))\leq r_0$, it holds
 $$\mu_{\partial}(\partial (0,x_{1}) ) > \mu_{\partial}(\partial (x_{2},\infty)).$$
\end{lem}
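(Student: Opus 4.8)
The plan is to express both boundary measures explicitly through the density $\eta$ and the function $C$, to estimate the two masses by an integration-by-parts identity, and then to compare the two sides by a logarithmic asymptotic analysis as the common mass tends to $0$; the decisive point will be the strict inequality $\frac{h}{2h-1}>\frac12$, which holds precisely because $h>\frac12$. \textbf{First, the boundary measures.} Using $\rho(x,y)=\frac1{1-h}|y^{1-h}-x^{1-h}|$ one has $(0,x_1)_\vv=[0,(x_1^{1-h}+(1-h)\vv)^{1/(1-h)}]$ and $(x_2,\infty)_\vv=[(x_2^{1-h}-(1-h)\vv)^{1/(1-h)},\infty)$; differentiating at $\vv=0$ and using the continuity of $\eta$ on $(0,\infty)$ together with $\eta(x)=\frac2Z x^{-2h}\e^{C(x)}$ (immediate from the definitions of $\eta,\Gamma_0,C$), one obtains $\mu_\partial(\partial(0,x_1))=x_1^h\eta(x_1)=\frac2Z x_1^{-h}\e^{C(x_1)}$ and $\mu_\partial(\partial(x_2,\infty))=x_2^h\eta(x_2)=\frac2Z x_2^{-h}\e^{C(x_2)}$.

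\textbf{Next, the masses.} Since $C'(y)=(\alpha-\delta y)y^{-2h}$ and $\e^{C(0^+)}=\e^{C(\infty)}=0$, integrating $(\e^{C})'=C'\e^{C}$ over $(0,x_1)$ and over $(x_2,\infty)$ gives $\int_0^{x_1}(\alpha-\delta y)y^{-2h}\e^{C(y)}\d y=\e^{C(x_1)}$ and $\int_{x_2}^\infty(\delta y-\alpha)y^{-2h}\e^{C(y)}\d y=\e^{C(x_2)}$. On $(0,x_1)$ one bounds $\alpha-\delta y$ between $\alpha-\delta x_1$ and $\alpha$; on $(x_2,\infty)$ one uses $y^{-2h}\le x_2^{-1}y^{1-2h}$ for the upper bound and, for the lower bound, integrates only over the short interval $[x_2,x_2+x_2^{2h-1}]$, on which $\e^{C}$ drops by at most a factor $\e^{-\delta}$. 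This yields, for $x_1$ small and $x_2$ large, $\mu((0,x_1))\asymp\e^{C(x_1)}$ and $\mu((x_2,\infty))\asymp x_2^{-1}\e^{C(x_2)}$ with constants depending only on $\alpha,\delta,h$. Writing $r:=\mu((0,x_1))=\mu((x_2,\infty))$, the formulas of the previous paragraph then give $\mu_\partial(\partial(0,x_1))\gtrsim x_1^{-h}\,r$ and $\mu_\partial(\partial(x_2,\infty))\lesssim x_2^{1-h}\,r$.

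\textbf{Asymptotics and conclusion.} As $r\to0$ one has $x_1\to0$ and $x_2\to\infty$. Taking logarithms in the mass estimates gives $C(x_1)=\log r+O(1)$ and $C(x_2)=\log r+\log x_2+O(1)$; moreover $C(x_1)=\frac{2\alpha}{1-2h}x_1^{1-2h}+O(1)$ as $x_1\to0$ and $C(x_2)=-\frac{\delta}{1-h}x_2^{2-2h}+O(1)$ as $x_2\to\infty$ (the remaining power terms being bounded), which in particular forces $\log x_2=O(\log|\log r|)=o(|\log r|)$. Hence $x_1^{1-2h}=\frac{2h-1}{2\alpha}|\log r|(1+o(1))$ and $x_2^{2-2h}=\frac{1-h}{\delta}|\log r|(1+o(1))$, so that $x_1^{-h}\asymp(|\log r|)^{h/(2h-1)}$ and $x_2^{1-h}\asymp(|\log r|)^{1/2}$. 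Therefore $\mu_\partial(\partial(0,x_1))/\mu_\partial(\partial(x_2,\infty))\gtrsim x_1^{-h}/x_2^{1-h}\asymp(|\log r|)^{\frac{h}{2h-1}-\frac12}=(|\log r|)^{\frac1{2(2h-1)}}\to\infty$, since $h>\frac12$. Consequently there exists $r_0\in(0,1)$ such that this ratio exceeds $1$ whenever $r\le r_0$, which is the assertion.

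\textbf{The main obstacle.} The technical core is the two middle steps: turning the heuristics $\mu((0,x_1))\asymp\e^{C(x_1)}$ and $\mu((x_2,\infty))\asymp x_2^{-1}\e^{C(x_2)}$ into rigorous two-sided bounds whose multiplicative constants are harmless after taking logarithms, and then checking the strict inequality $\frac{h}{2h-1}>\frac12$ — the exact place where the assumption $h>\frac12$ is used. For $h=\frac12$ the two exponents of $|\log r|$ coincide and this comparison degenerates, consistent with the borderline behaviour in \cite{ZZ}.
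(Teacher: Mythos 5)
Your proof is correct and follows essentially the same route as the paper: express the boundary measures as $x^h\eta(x)$, obtain the tail asymptotics $\mu((0,x_1))\asymp\e^{C(x_1)}$ and $\mu((x_2,\infty))\asymp x_2^{-1}\e^{C(x_2)}$, and use the mass constraint to force the ratio of boundary measures to diverge as the common mass $r\to 0$. The only difference is technical bookkeeping: you derive the tail asymptotics by integrating $(\e^{C})'=C'\e^{C}$ with explicit two-sided bounds, whereas the paper invokes L'H\^opital's rule; and your direct computation of the exponent $\frac{h}{2h-1}-\frac12=\frac{1}{2(2h-1)}>0$ is arguably cleaner than the paper's normalization by $x_2^{(1-h)(2h-1)/h}$.
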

\begin{proof}
By the definition of $\mu_\partial$, we have
\begin{equation}\begin{split}\nonumber
\mu_{\partial}((0,x))
&=\lim\limits_{\varepsilon \to 0} \frac{\mu \left( \{y :0<\frac{1}{1-h}(y^{1-h}-x^{1-h})  \leq \varepsilon \}\right)}{\varepsilon }\\
&=\lim\limits_{\varepsilon \to 0} \frac{\int_{x}^{[x^{1-h}+(1-h) \varepsilon]^{\frac{1}{1-h}}} \eta (y) \d y}{\varepsilon }\\
&=\lim\limits_{\varepsilon \to 0} \frac{\eta (x)\left\{[x^{1-h}+(1-h) \varepsilon]^{\frac{1}{1-h}}-x\right\}}{\varepsilon }\\
&=x^{h} \eta (x)=\frac{\Gamma_0x^{-h}\e^{\frac{2 \alpha }{1-2h} x^{1-2h}-\frac{\delta }{1-h} x^{2-2h}}}{Z}.
\end{split}\end{equation}
Similarly, we arrive at
\begin{equation}\begin{split}\nonumber
\mu_{\partial}((x,\infty))
&=\lim\limits_{\varepsilon \to 0} \frac{\mu \left( \{y :-\varepsilon\leq\frac{1}{1-h}(y^{1-h}-x^{1-h}) <0 \}\right)}{\varepsilon }\\
&=\lim\limits_{\varepsilon \to 0} \frac{\int_{[x^{1-h}-(1-h) \varepsilon]^{\frac{1}{1-h}}}^{x} \eta (y) \d y}{\varepsilon }\\
&=\lim\limits_{\varepsilon \to 0} \frac{\eta (x)\left\{x-[x^{1-h}-(1-h) \varepsilon]^{\frac{1}{1-h}}\right\}}{\varepsilon }\\
&=x^{h} \eta (x)=\frac{\Gamma_0x^{-h}\e^{\frac{2 \alpha }{1-2h} x^{1-2h}-\frac{\delta }{1-h} x^{2-2h}}}{Z}.
\end{split}\end{equation}
Letting $(x^h\eta(x))'=0$, we get
$$2\alpha=2\delta x+h x^{2h-1}.$$
Since $h\in(\frac{1}{2},1)$, there exists $x_{0}$ such that $x^{h} \eta (x)$ is strictly increasing on $(0,x_{0})$ and strictly decreasing on $(x_{0},\infty)$.

Letting $r>0$ be small enough, take $x_{1}(r), x_{2}(r)\in[0,\infty)$ such that $$\mu ((0,x_{1}(r)))=\mu ((x_{2}(r),\infty))=r.$$
It is clear \begin{align}\label{0i}\lim_{r \to 0}x_{1} (r)=0, \ \ \lim_{r \to 0}x_{2} (r)=\infty.
\end{align}
Moreover, by L'Hopital's rule, we have $$\lim\limits_{r \to 0}\frac{\int_{0}^{x_{1}(r)}s^{-2h}\e^{\frac{2 \alpha }{1-2h} s^{1-2h}-\frac{\delta }{1-h} s^{2-2h}}\d s}{\frac{1}{2\alpha}\e^{\frac{2 \alpha }{1-2h} (x_{1}(r))^{1-2h}}}=1,$$
and
\begin{align}\label{luo}
\lim\limits_{r \to 0}\frac{\int_{x_{2}(r)}^{\infty}s^{-2h}\e^{\frac{2 \alpha }{1-2h} s^{1-2h}-\frac{\delta }{1-h} s^{2-2h}}\d s}{\frac{1}{2 \delta}(x_{2}(r))^{-1}\e^{-\frac{\delta }{1-h} (x_{2}(r))^{2-2h}}}=1.
\end{align}
Thus, it holds
\begin{align*}
&1=\lim\limits_{r \to 0}\frac{\frac{1}{2\alpha}\e^{\frac{2 \alpha }{1-2h} (x_{1}(r))^{1-2h}}}{\frac{1}{2 \delta}(x_{2}(r))^{-1}\e^{-\frac{\delta }{1-h} (x_{2}(r))^{2-2h}}}\\
&=\frac{\delta}{\alpha}\lim_{r \to 0}\frac{\e^{\frac{2 \alpha }{1-2h} (x_{1}(r)^{-1})^{2h-1}}}{(x_{2}(r))^{-1}\e^{-\frac{\delta }{1-h} (x_{2}(r))^{2-2h}}}\\
&=\frac{\delta}{\alpha}\lim_{r \to 0}\e^{-\frac{2 \alpha }{2h-1} (x_{1}(r)^{-1})^{2h-1}+\frac{\delta }{1-h} (x_{2}(r))^{2-2h}+\log (x_{2}(r))}.
\end{align*}
This means $$\lim_{r \to 0}\left\{-\frac{2 \alpha }{2h-1} (x_{1}(r)^{-1})^{2h-1}+\frac{\delta }{1-h} (x_{2}(r))^{2-2h}+\log (x_{2}(r))\right\}=\log\frac{\alpha}{\delta}.$$
Thus, \eqref{0i} yields
$$\lim_{r \to 0}\left\{-\frac{2 \alpha }{2h-1} \frac{(x_{1}(r)^{-1})^{2h-1}}{(x_{2}(r))^{\frac{(1-h)(2h-1)}{h}}}+\frac{\delta }{1-h} \frac{(x_{2}(r))^{2-2h}}{(x_{2}(r))^{\frac{(1-h)(2h-1)}{h}}}\right\}=0.$$
Since
$$\lim_{r\to0}\frac{(x_{2}(r))^{2-2h}}{(x_{2}(r))^{\frac{(1-h)(2h-1)}{h}}} =\lim_{r\to0}(x_{2}(r))^{\frac{1-h}{h}}=\infty,$$
we have
$$\lim_{r \to 0}\frac{(x_{1}(r)^{-1})^{2h-1}}{(x_{2}(r))^{\frac{(1-h)(2h-1)}{h}}}=\infty.$$
This together with \eqref{0i} and the representation of $\mu_\partial((0,x))$ and $\mu_\partial((x,\infty))$ implies
\begin{align*}
&\lim\limits_{r \to 0}\frac{\mu_{\partial}(\partial (0,x_{1}(r)))}{\mu_{\partial}(\partial (x_{2}(r),\infty))}\\
&=\lim\limits_{r \to 0}\frac{(x_{1}(r))^{-h}\e^{\frac{2 \alpha }{1-2h} (x_{1}(r))^{1-2h}-\frac{\delta }{1-h} (x_{1}(r))^{2-2h}}}{(x_{2}(r))^{-h}\e^{\frac{2 \alpha }{1-2h} (x_{2}(r))^{1-2h}-\frac{\delta }{1-h} (x_{2}(r))^{2-2h}}}\\
&=\lim\limits_{r \to 0}\frac{(x_{1}(r))^{-h}\e^{\frac{2 \alpha }{1-2h} (x_{1}(r))^{1-2h}}}{(x_{2}(r))^{-h}\e^{-\frac{\delta }{1-h} (x_{2}(r))^{2-2h}}}\\
&=\frac{\alpha}{\delta}\lim\limits_{r \to 0}\frac{((x_{1}(r))^{-1})^{h}}{(x_{2}(r))^{1-h}}\\
&=\frac{\alpha}{\delta}\lim_{r \to 0}\left(\frac{(x_{1}(r)^{-1})^{2h-1}}{(x_{2}(r))^{\frac{(1-h)(2h-1)}{h}}}\right)^{\frac{h}{2h-1}}=\infty.
\end{align*}
So, there exists $r_{0}>0$ such that for any $x_{1}, x_{2}\in[0,\infty)$ satisfying $\mu ((0,x_{1}))=\mu ((x_{2},\infty))\leq r_{0}$, it holds
\begin{align}\label{com}
\mu_{\partial}(\partial (0,x_{1}) ) > \mu_{\partial}(\partial (x_{2},\infty)).
\end{align}
Thus, we complete the proof.
\end{proof}
\begin{proof}[Proof of Theorem \ref{T-Super}]
(1) Firstly, we prove that there exists small enough $\bar{r}_0>0$ such that for any $r\in(0,\bar{r}_0)$, $k(r)$ can only get the lower bound on the set $(x,\infty)$ with $\mu((x,\infty))\leq r$.

Let $\bar{r}_0=\frac{1}{2}\{\mu(0,x_0)\wedge\mu(x_0,\infty)\}\wedge r_0$ with $r_0$ introduced in Lemma \ref{infty}. Fix $r\in(0,\bar{r}_0)$. For any open set $A\subset [0,\infty)$ with $\mu(A)=r$, let $A_{1}:=A \cap (0,x_{0})$ and $A_{2}:=A \cap (x_{0},\infty)$. Then $\mu(A_1)\leq \frac{1}{2}\mu(0,x_0)$ and $\mu(A_2)\leq \frac{1}{2}\mu(x_0,\infty)$. %Denote $\mu(A_{1})=r_{1}$ and $\mu(A_{2})=r_{2}$.
Let $x_2=\inf\{x:x\in A_2\}$ and $x_1=\sup\{x:x\in A_1\}$.
Take $\bar{x}_1\leq x_1$ and $\bar{x}_2\geq x_2$ such that $ \mu((0,\bar{x}_{1}))=\mu( A_{1})$ and $\mu( (\bar{x}_{2},\infty))=\mu(A_2)$.
Since $x^{h} \eta (x)$ is strictly increasing on $(0,x_{0})$ and strictly decreasing on $(x_{0},\infty)$,
we have $$\mu_{\partial}(\partial A_{1} )\geq \mu_{\partial}(\partial (0,x_{1}))\geq\mu_{\partial}(\partial (0,\bar{x}_{1}))$$
and $$\mu_{\partial}(\partial A_{2} )\geq \mu_{\partial}(\partial (x_{2},\infty))\geq\mu_{\partial}(\partial (\bar{x}_{2},\infty)).$$
This yields $$\frac{\mu_\partial(\partial A)}{\mu(A)}\geq\frac{\mu_{\partial}(\partial ((0,\bar{x}_{1})\cup (\bar{x}_{2},\infty)) )}{\mu((0,\bar{x}_{1})\cup (\bar{x}_{2},\infty))}. $$
For any $y_1, y_2\in(0,\infty)$ satisfying $\mu((0,y_{1}))+\mu((y_{2},\infty))=r$, define $$\varphi (y_{1},y_{2}):=\mu_{\partial}(\partial ((0,y_{1})\cup (y_{2},\infty)) )=y_1^h\eta(y_1)+y^h_2\eta(y_2).$$
Next, we show that $$\varphi (0,x)=\inf\{\varphi (y_{1},y_{2}):\mu((0,y_{1}))+\mu((y_{2},\infty))=r\}, $$
here, $\mu((x,\infty))=r$.
In fact, from $\mu((0,y_{1}))+\mu((y_{2},\infty))=r$, there exists a function $\phi$ such that $y_2=\phi(y_1)$ and $\phi'(y_1)=\frac{\eta(y_1)}{\eta(y_2)}$. Thus, we obtain
$$\varphi(y_1, y_2)=\varphi(y_1, \phi(y_1))=:\Phi(y_1). $$
By the representation of $\eta(s)$, we have
\begin{align*}
\Phi'(y_1)&=\frac{\partial\varphi}{\partial y_1}(y_1, y_2)+\frac{\partial\varphi}{\partial y_2}(y_1, y_2)\phi'(y_1)\\
&=y_1^h\eta'(y_1)+hy_1^{h-1}\eta(y_1)+(y_2^h\eta'(y_2) +hy_2^{h-1}\eta(y_2))\phi'(y_1)\\
&=y_1^h\eta'(y_1)+hy_1^{h-1}\eta(y_1)+(y_2^h\eta'(y_2) +hy_2^{h-1}\eta(y_2))\frac{\eta(y_1)}{\eta(y_2)}\\
&=\frac{\Gamma_0}{Z}\e^{\frac{2 \alpha }{1-2h} y_1^{1-2h}-\frac{\delta }{1-h} y_1^{2-2h}}\Bigg(y_1^h(-2hy_1^{-2h-1}+y_1^{-2h}2\alpha y_1^{-2h}-y_1^{-2h}2\delta y_1^{1-2h})+hy_1^{h-1}y_1^{-2h}\\
&+y_2^h\frac{y_1^{-2h}}{y_2^{-2h}}(-2hy_2^{-2h-1}+y_2^{-2h}2\alpha y_2^{-2h}-y_2^{-2h}2\delta y_2^{1-2h})+hy_2^{h-1}y_1^{-2h}\Bigg)\\
&=\eta(y_1)\Bigg(-h(y_1^{h-1}+y_2^{h-1})+2\alpha (y_1^{-h}+y_2^{-h})-2\delta (y_1^{1-h}+y_2^{1-h})\Bigg).
\end{align*}
Since $h\in(\frac{1}{2},1)$ and $\alpha,\delta>0$, there exists a small enough constant $r_1>0$ such that $\Phi'(y_1)>0$ when $y_1\in(0,r_1)$, and there exists a big enough constant $r_2>0$ such that $\Phi'(y_1)<0$ when $y_2\in(r_2,\infty)$. Thus, $\varphi$ can only take minimum on $(y_2,\infty)$ with $\mu((y_2,\infty))=r$ or on $(0,y_1)$ with $\mu((0,y_1))=r$. By \eqref{com}, $\varphi$ take minimum on $(y_2,\infty)$ with $\mu((y_2,\infty))=r$.
Thus, we obtain $$\frac{\mu_\partial(\partial A)}{\mu(A)}\geq \frac{\mu_{\partial}(\partial ((0,\bar{x}_{1})\cup (\bar{x}_{2},\infty)) )}{\mu((0,\bar{x}_{1})\cup (\bar{x}_{2},\infty))} \geq \frac{\mu_{\partial}(\partial (x,\infty))}{\mu((x,\infty))},$$
here, $\mu((x,\infty))=r$.
Thus, we have
$$k(r)=\inf\limits_{\{x|\mu((x,\infty)) \leq r\}} \frac{x^{h}\eta(x)}{\mu((x,\infty))}.$$
Take $x_r>0$ such that $\mu((x_{r},\infty))=r$. Then we have $\lim_{r\to0}x_r=\infty$. By \eqref{luo}, we have \begin{align*}
\lim\limits_{r\to0}\frac{x_r^{2h-1}\eta(x_r)}{\mu((x_r,\infty))}=2 \delta.
 \end{align*}
This implies
\begin{align}\label{inf}
\lim_{r\to 0}k(r) =\lim_{r\to0}\frac{x_r^{h}\eta(x_r)}{\mu((x_r,\infty))} =\lim_{r\to0}\frac{x_r^{1-h}x_r^{2h-1}\eta(x_r)}{\mu((x_r,\infty))}=\lim_{r\to0}x_r^{1-h}=\infty.
\end{align}

According to \cite[Theorem 3.4.16]{W2}, the super Poincar\'{e} inequality holds for $$\beta (r)=\frac{4}{k^{-1}(2\sqrt{2}r^{-\frac{1}{2}})},\ \  r>0.$$

(2) It follows form \eqref{luo} that
$$\lim_{r \to 0}\frac{\mu((x_{r},\infty))}{\frac{\Gamma_0x_{r}^{-1}\e^{\frac{2 \alpha }{1-2h} x_{r}^{1-2h}-\frac{\delta }{1-h} x_{r}^{2-2h}}}{2Z\delta}}=1,$$
which implies
$$\lim_{r\to0}\frac{\e ^{\log r}}{\e^{-\frac{\delta }{1-h} x_{r}^{2-2h}-\log x_{r}}}=\lim_{r\to0}\e ^{\log r+\frac{\delta }{1-h} x_{r}^{2-2h}+\log x_{r}}=\frac{\Gamma_0}{2Z\delta}.$$
This yields
$$\lim_{r\to0}\{\log r+\frac{\delta }{1-h} x_{r}^{2-2h}+\log x_{r}\}=\log\frac{\Gamma_0}{2Z\delta}.$$
Since $h\in(\frac{1}{2},1)$, we obtain
$$\lim_{r\to0}\frac{\sqrt{\log r^{-1}}}{x_{r}^{1-h}}=\sqrt{\frac{\delta }{1-h} }.$$
Combining this with \eqref{inf}, we arrive at
$$\lim_{r\to 0}\frac{k(r)}{\sqrt{\frac{1-h }{\delta} }\sqrt{\log r^{-1}}} =\lim_{r\to0}\frac{x_r^{1-h}}{\sqrt{\frac{1-h }{\delta} }\sqrt{\log r^{-1}}}=1.$$
Thus, there exist constants $r_0>0$ and $c>0$ such that $k(r)\geq c[-\log r]^{\frac{1}{2}}$ for $r\in(0.r_0)$. According to \cite[Corollary 3.4.17]{W2} with $\delta=1$, \eqref{sp} holds with $\beta (r)= \e^{C(1+r^{-1})}$ for some constant $C>0$.

(3) Let $\rho(0,x)=\frac{1}{1-h}x^{1-h}$, then $\rho(0,\cdot) \in \mathcal{D}(\mathcal{E})$.  Set $h_n=\rho(0,\cdot)\wedge n$. For any $g\in  \mathcal{D}(\mathcal{E})$ with $\mu(|g|)\leq 1$, we have \begin{align*}
&\mathcal{E}(h_{n}g,h_{n})-\frac{1}{2}\mathcal{E}(h^{2}_{n},g)\\
&=\frac{1}{2}\int_{0}^{\infty} x^{2h}(h_{n}g)^{'}(x)h_{n}^{'}(x)\mu (\mathrm{d}x)-\frac{1}{4}\int_{0}^{\infty} x^{2h}(h^{2}_{n})'(x)g'(x)\mu (\mathrm{d}x)\\
&\leq\frac{1}{2}\int_{0}^{(n(1-h))^{\frac{1}{1-h}}} x^{2h}(h_{n}^{'})^2(x)g(x)\mu (\mathrm{d}x)\leq \frac{1}{2}\mu (|g|) \leq \frac{1}{2}.
 \end{align*}
So by \cite[Definition 1.2.1]{W2}, $L_{\mathcal{E}}(\rho(0,\cdot))\leq 1.$

However, for any $\lambda\in(\frac{1}{2},1)$ and $\varepsilon>0$, we have $$\mu \{\exp\{\varepsilon \rho(0,\cdot)^{\frac{2\lambda}{2\lambda-1}}\}\}=\frac{\Gamma_0}{Z}\int_{0}^{\infty}x^{-2h}\e^{\frac{2 \alpha }{1-2h} x^{1-2h}-\frac{\delta }{1-h} x^{2-2h}+\varepsilon(\frac{1}{1-h})^{\frac{2\lambda}{2\lambda-1}}x^{\frac{2\lambda(1-h)}{2\lambda-1}} }=\infty, $$ here, in the last display, we have used $\frac{2\lambda}{2\lambda-1}>2$ for any $\lambda\in(\frac{1}{2},1)$.
By \cite[Corollary 3.3.22]{W2}, the super Poincar\'{e} inequality \eqref{sp} does not hold with $\beta(r)=\e^{C(1+r^{-\lambda})}$ for $\frac{1}{2}<\lambda<1$. Similarly, we can show $\mu(\exp[\exp(\varepsilon \rho(0,\cdot))])=\|\rho(0,\cdot)\|_\infty=\infty$. Again by \cite[Corollary 3.3.22]{W2}, \eqref{sp} does not hold with $\beta(r)=\e^{C(1+r^{-\lambda})}$ for $0<\lambda\leq \frac{1}{2}$. Thus, we finish the proof.
\end{proof}
\paragraph{Acknowledgement.} The authors would like to thank Professor Feng-Yu Wang and Shao-Qin Zhang for corrections and helpful comments.

\beg{thebibliography}{99}

\bibitem{A} Alfonsi, A.,  \emph{On the discretization schemes for the CIR (and Bessel squared) processes,} Monte Carlo Methods and Applications, 11(2005), 355-384.

\bibitem{A1} Alfonsi, A.,  \emph{High order discretization schemes for the CIR process: Application to affine term structure and Heston models,} Mathematics of Computation, 79(2010), 209-237.

\bibitem{A2} Alfonsi, A.,  \emph{Strong order one convergence of a drift implicit Euler scheme: Application to the CIR process,} Statistics and Probability Letters, 83(2013), 602-607.

\bibitem{ATW} Arnaudon, M., Thalmaier, A., Wang, F.-Y.,   \emph{Harnack inequality and heat kernel estimates on manifolds with curvature unbounded below,} Bull. Sci. Math., 130(2006), 223-233.

\bibitem{C} Cairns, A. J. G., \emph{Interest rate models: an introduction,} Princeton University
Press, 2004.

\bibitem{CJM} Chassagneux, J. F., Jacquler, A., Mihaylov, I.,  \emph{An Explicit Euler Scheme with Strong Rate of Convergence for Financial SDEs with Non-Lipschitz Coefficients,} Siam Journal on Financial Mathematics, 7(2016), 993-1021.

\bibitem{CIR} Cox, J. C., Ingersoll, J. E., Ross, S. A.,  \emph{A theory of the term structure of interest rates,} Econometrica, 53(1985), 385-407.

\bibitem{CIR1} Cox, J. C., Ross, S. A.,  \emph{An intertemporal general equilibrium model of asset prices,} Econometrica, 53(1985), 363-384.

\bibitem{CL} Chou, C. S., Lin, H. J.,  \emph{Some properties of CIR processes,} Stochastic Analysis and Applications, 24(2006), 901-912.

 \bibitem{GR} Gy\"ongy, I., R\'{a}sonyi, M., A note on Euler approximations for SDEs with H\"older continuous diffusion coefficients, Stoch. Proc. Appl., 121(2011), 2189--2200.

\bibitem{IW} Ikeda, N., Watanabe, S.,  \emph{Stochastic differential equations and diffusion processes,} 2nd ed. Amsterdam: North Holland, 1989.

\bibitem{KS} Karatzas, I., Shreve, S. E., \emph{Brownian motion and stochastic calculus,} 2nd
edition, corrected 6th printing. Springer, 2000.

%\bibitem{O} {\O}Ksendal, B.,  \emph{Stochastic differential equations,} 6th ed. Berlin: Springer, 2003.

%\bibitem{RW} Rogers, L. C. G., Williams, D.,  \emph{Diffusions, Markov processes and Martingales,} Cambridge: Cambridge University Press, 2000.

\bibitem{S} Stamatiou, I., S.,  \emph{An explicit positivity preserving numerical scheme for CIR/CEV type delay models with jump,} Journal of Computational and Applied Mathematics, 360(2019), 78-98.

\bibitem{WMC} Wu, F., Mao, X., R., Chen, k.,  \emph{The Cox-Ingersoll-Ross model with delay and strong convergence of its Euler-Maruyama approximate solutions,} Applied Numerical Mathematics, 59(2009), 2641-2658.

\bibitem{W2} Wand, F.-Y.,  \emph{Functional inequalities, Markov semigroups and Spectral theory,} Beijing: Science Press, 2005.

\bibitem{W} Wang, F.-Y.,  \emph{Harnack Inequality and Applications for Stochastic Partial Differential Equations,} Berlin: Springer, 2013.

\bibitem{W3} Wang, F.-Y.,  \emph{Harnack inequality and applications for stochastic generalized porous media equations,} Ann Probab., 35(2007), 1333-1350.

\bibitem{W4} Wang, F.-Y.,  \emph{Harnack inequality on manifolds with boundary and applications[J].} J. Math. Pures Appl., 94(2010), 304-321.

\bibitem{WY} Wang, F.-Y., Yuan, C.,  \emph{Harnack inequality for functional SDEs with multiplicative noise and applications,} Stoch. Proc. Appl., 121(2011), 2692-2710.

\bibitem{YW} Yang, X., Wang, X., J.,  \emph{A transformed jump-adapted backward Euler method for jump-extended CIR and CEV models,} Numerical Algorithms, 74(2017), 39-57.

\bibitem{ZZ} Zhang, S.-Q., Zheng, Y.,  \emph{Functional Inequality and Spectrum Structure for CIR Model, in Chinese,} Journal of  Beijing Normal University (Natural Science), 54(2018), 572-582.
\end{thebibliography}

\end{document}